\newtheorem{theorem}{Theorem}
\newtheorem{definition}[theorem]{Definition}
\newtheorem{remark}[theorem]{Remark}
\newtheorem{proposition}[theorem]{Proposition}
\title{\textbf{Euler Equations of the Generalized Bott-Virasoro Group}}
\author{DING BOSHU}
\begin{document}

\maketitle

\begin{abstract}
   \noindent  In this paper, we will generalize the Bott-Virasoro group, applying the concept of the connection cochain, and derive the Euler equations corresponding to the generalized Bott-Virasoro group. We will show the relationships between the new Euler equations and the old ones. Moreover, we will study the geodesic equation corresponding to the Burgers equation and apply it to the exponential curve.
\end{abstract}

\section{Introduction}

 In Section 2, we give some fundamental knowledge of the Euler equations, 
 \begin{align*}
             \frac d{dt}m(t)=\text{\emph{ad}}^*_{A^{-1}(m(t))}m(t)
    \end{align*}
 which has also been generalized to the homogeneous space. For some details, please refer to \cite{1}. In Section 3, we apply the concept of the connection cochain, and construct the generalized Bott-Virasoro group $\text{Diff}(S^1)\times_{\chi^\alpha_\Bbb R+\beta B}+\Bbb R$, and shows that 
 \begin{proposition}
  The generalized Virasoro group $\text{Diff}(S^1)\times_{\chi^\alpha_\Bbb R+\beta B}\Bbb R$ is isomorphic to $((\widetilde{\text{Diff}(S^1)}\times_{\Bbb Z,\alpha}\Bbb R)\times_{\beta B_\Bbb R}\Bbb R)/\text{Im }\iota$, where $B_\Bbb R:(\widetilde{\text{Diff}(S^1)}\times_{\Bbb Z,\alpha}\Bbb R)\times (\widetilde{\text{Diff}(S^1)}\times_{\Bbb Z,\alpha}\Bbb R)\to\Bbb R$ is given by
  \begin{align*}
    ( [\tilde f_1,a_1],[\tilde f_2,a_2])\mapsto B(f_1,f_2),
  \end{align*}
  and $\iota:\Bbb R\to (\widetilde{\text{Diff}(S^1)}\times_{\Bbb Z,\alpha}\Bbb R)\times_{\beta B_\Bbb R}\Bbb R$ is given by
  \begin{align*}
           a\mapsto ([\text{\emph{id}},a],-a).
  \end{align*}
\end{proposition}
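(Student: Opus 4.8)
The plan is to peel off the two iterated central extensions appearing on the right-hand side, recognise the intermediate object as a central extension of $\text{Diff}(S^1)$ by $\Bbb R^2$, and then observe that the quotient by $\text{Im }\iota$ collapses the two central $\Bbb R$-factors to one while adding the two corresponding cocycles — which is exactly what produces the cocycle $\chi^\alpha_\Bbb R+\beta B$. Throughout I write $G=\text{Diff}(S^1)\times_{\chi^\alpha_\Bbb R+\beta B}\Bbb R$ for the left-hand side and $\pi\colon\widetilde{\text{Diff}(S^1)}\to\text{Diff}(S^1)$ for the covering map.

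The first step is to record, from the construction in Section 3, that $[\tilde f,a]\mapsto f:=\pi(\tilde f)$ realises $\widetilde{\text{Diff}(S^1)}\times_{\Bbb Z,\alpha}\Bbb R$ as a central extension of $\text{Diff}(S^1)$ by $\Bbb R$, and that with respect to the section $f\mapsto[\tilde f,0]$ coming from the normalised lift $\tilde f$, its group $2$-cocycle is precisely $\chi^\alpha_\Bbb R$ — this is how $\chi^\alpha_\Bbb R$ arises from the connection cochain, the failure of $f\mapsto[\tilde f,0]$ to be multiplicative being governed (with weight $\alpha$) by the integer $n(f_1,f_2)$ determined by $\tilde f_1\tilde f_2=\widetilde{f_1f_2}\,T_{n(f_1,f_2)}$. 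This yields an isomorphism of extensions $\widetilde{\text{Diff}(S^1)}\times_{\Bbb Z,\alpha}\Bbb R\xrightarrow{\ \sim\ }\text{Diff}(S^1)\times_{\chi^\alpha_\Bbb R}\Bbb R$ which carries $\beta B_\Bbb R$ to the pull-back of $\beta B$ along $\text{Diff}(S^1)\times_{\chi^\alpha_\Bbb R}\Bbb R\to\text{Diff}(S^1)$ (since $B_\Bbb R$ only depends on the underlying diffeomorphisms), and which carries $\text{Im }\iota$ to $\{1\}\times\Delta^-$, where $\Delta^-=\{(a,-a):a\in\Bbb R\}$ and we have used that the normalised lift of $\text{id}$ is $\text{id}$.

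Writing $K=\text{Diff}(S^1)$, $\omega=\chi^\alpha_\Bbb R$, $\rho=\beta B$ and $\rho'$ for the pull-back of $\rho$ to $K\times_\omega\Bbb R$, the second step identifies $\bigl(K\times_\omega\Bbb R\bigr)\times_{\rho'}\Bbb R$ with the central extension $K\times_{(\omega,\rho)}\Bbb R^2$ of $K$ by $\Bbb R^2$, whose underlying set is $K\times\Bbb R^2$ and whose product is
\begin{align*}
\bigl(k_1,(s_1,t_1)\bigr)\bigl(k_2,(s_2,t_2)\bigr)
&=\Bigl(k_1k_2,\ (s_1,t_1)+(s_2,t_2)\\
&\qquad\qquad +\bigl(\omega(k_1,k_2),\rho(k_1,k_2)\bigr)\Bigr).
\end{align*}
Since $\{1\}\times\Bbb R^2$ is central, $\text{Im }\iota=\{1\}\times\Delta^-$ is a closed normal (indeed central) subgroup and the quotient is a group; moreover the surjection $\Bbb R^2\to\Bbb R$, $(s,t)\mapsto s+t$, has kernel exactly $\Delta^-$. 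Pushing the $\Bbb R^2$-valued cocycle $(\omega,\rho)$ forward along this surjection gives the scalar cocycle $\omega+\rho$, whence
\begin{align*}
\Bigl(\bigl(\widetilde{\text{Diff}(S^1)}\times_{\Bbb Z,\alpha}\Bbb R\bigr)\times_{\beta B_\Bbb R}\Bbb R\Bigr)\big/\text{Im }\iota\ \cong\ K\times_{\omega+\rho}\Bbb R\ =\ \text{Diff}(S^1)\times_{\chi^\alpha_\Bbb R+\beta B}\Bbb R.
\end{align*}
Unwound, the isomorphism sends the class of $([\tilde f,a],b)$ to $\bigl(f,\ a+b+\alpha\,\theta(\tilde f)\bigr)$, where $f=\pi(\tilde f)$ and $\theta$ is the connection-cochain correction making the second coordinate independent of the chosen lift.

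Finally one checks that this map is a well-defined bijective group homomorphism. It covers the identity on $\text{Diff}(S^1)$ and, because $\theta$ is normalised so that $\theta(\text{id})=0$, it restricts to the identity on the central $\Bbb R$ (the coordinate $a+b$ on the quotient); bijectivity then follows from the five lemma applied to the two short exact sequences $1\to\Bbb R\to(\,\cdot\,)\to\text{Diff}(S^1)\to1$. The one genuinely non-formal point is the first step: verifying that the cocycle read off the $\Bbb Z$-twist model $\widetilde{\text{Diff}(S^1)}\times_{\Bbb Z,\alpha}\Bbb R$ is literally $\chi^\alpha_\Bbb R$, and that this identification is compatible both with the Bott-cocycle lift $B_\Bbb R$ and with $\iota$. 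Once that bookkeeping with lifts, deck translations and the connection cochain is in place, the remainder is the standard arithmetic of central extensions outlined above.
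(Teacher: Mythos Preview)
Your argument and the paper's are at bottom the same: both produce the explicit map $[[\tilde f,a],b]\mapsto\bigl(f,\,b+\tau^\alpha_{\Bbb R}([\tilde f,a])\bigr)=\bigl(f,\,a+b+\tau^\alpha(\tilde f)\bigr)$ --- your ``$\alpha\theta$'' is exactly the paper's $\tau^\alpha$ --- and both rest on the identity $\chi^\alpha_{\Bbb R}=-\delta\tau^\alpha_{\Bbb R}$. The paper simply writes down this $\Psi$, checks the homomorphism property via that coboundary identity, and exhibits the inverse $\Phi(f,b)=[s^\alpha_{\Bbb R}(f),b]$; you instead route through the intermediate $\text{Diff}(S^1)\times_{(\chi^\alpha_{\Bbb R},\beta B)}\Bbb R^2$ and push forward along $\Bbb R^2\to\Bbb R$, $(s,t)\mapsto s+t$. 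That repackaging is fine and arguably cleaner conceptually.

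There is, however, a slip in your first step. The section $f\mapsto[\tilde f,0]$ built from a ``normalised lift'' does \emph{not} have Euler cocycle $\chi^\alpha_{\Bbb R}$: as you yourself note, its failure to be multiplicative is governed by the integer $n(f_1,f_2)$, so its cocycle is the discrete $-2\pi^2\alpha\,n(f_1,f_2)$, which is merely cohomologous to $\chi^\alpha_{\Bbb R}$. The section that yields $\chi^\alpha_{\Bbb R}$ on the nose is the paper's $s^\alpha_{\Bbb R}(f)=[\tilde f,-\tau^\alpha(\tilde f)]$, and indeed this is what your final formula (with the $\theta$-correction) is implicitly using. If you insist on $f\mapsto[\tilde f,0]$, your pushforward step lands in $\text{Diff}(S^1)\times_{-2\pi^2\alpha n+\beta B}\Bbb R$ and you still owe one more isomorphism of extensions; if instead you use $s^\alpha_{\Bbb R}$ from the start, the argument goes through verbatim and collapses to the paper's direct verification. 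Either way the content is the same --- just make the choice of section consistent with the cocycle you claim to get.
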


\noindent For the details of the connection cochain, please refer to \cite{3}. In Section 4, we talk about the Burgers Eqaution and shows for the exponential curve $\exp(tX)$ that
\begin{proposition}
     The curve $\exp(tX)$ is a geodesic if and only if $X=\text{const}\ne0$. 
\end{proposition}

\noindent In section 5, we talk about the KdV equation and apply the Euler equation to the generalized Bott-Virasoro group in Proposition \ref{2517}. Moreover, we shows that the mKdV equation is Hamiltonian with respect to some Poisson structure in Proposition \ref{2518}. In Section 6 and 7, we apply the Euler equation to the generalized Bott-Virasoro group and obtain 

\begin{proposition}
  The Euler equation corresponding to $(\text{Diff}(S^1)\times_{\chi_\Bbb R^\alpha+\beta B}\Bbb R)/S^1$ is
  \begin{align*}
     \frac d{dt}(u,a)&=\text{ad}^*_{((-\partial_x^2)^{-1}u,a)}(udx\otimes dx,a) \\
     &=(-2((-\partial_x^2)^{-1}u)'u-((-\partial_x^2)^{-1}u)u'-\alpha a((-\partial_x^2)^{-1}u)'''+\beta a((-\partial_x^2)^{-1}u)',a).
  \end{align*}
  Letting $u=v''$, we have
  \begin{align*}
         v_{xxt}=2v_xv_{xx}+vv_{xxx}+\alpha av_{xxx}-\beta av_x,
  \end{align*}
  which is a new equation. Letting $v=w-\alpha a$, we have
  \begin{align*}
     v_{xxt}=2v_xv_{xx}+vv_{xxx}-\beta av_x.
  \end{align*}
\end{proposition}

\begin{proposition}
The Euler equation corresponding to the generalized Virasoro group and $H^1$-metric is
\begin{align*}
   \frac d{dt} (u,a)&=\text{ad}^*_{((1-\partial_x^2)^{-1}u\frac d{dx},a)}(udx\otimes dx,a) \\
   &=-2(((1-\partial_x^2)^{-1}u)'u-((1-\partial_x^2)^{-1}u)u'-\alpha a((1-\partial_x^2)^{-1}u)'''+\beta a((1-\partial_x^2)^{-1}u)',a)
\end{align*}
Letting $u=v-v''$, we have
\begin{align*}
   v_t-v_{xxt}=-3v_xv+2v_xv_{xx}+vv_{xxx}-\alpha av_{xxx}+\beta av_x.
\end{align*}
Letting $v=w+\frac13a\beta$, we obtain
\begin{align*}
    w_t-w_{xxt}=-3w_xw+2w_xw_{xx}+ww_{xxx}-(\alpha a+\frac13\beta a)w_{xxx}.
\end{align*}
\end{proposition}

\section{The Euler Equation}

Let $G$ be a Lie group and $\mathfrak g$ its Lie algebra.
\begin{definition}
\normalfont
    A linear invertible operator $A:\mathfrak g\to\mathfrak g^*$ is called an \emph{\textbf{inertia operator}} if
    \begin{align*}
        A(u)(v)=A(v)(u)
    \end{align*}
    for any $u,v\in\mathfrak g$.
\end{definition}

\noindent The inertia operator gives rise to an inner product $\langle-,-\rangle:\mathfrak g\times\mathfrak g\to \Bbb R$ given by
\begin{align*}
       (u,v)\mapsto A(u)(v)
\end{align*}
for $u,v\in\mathfrak g$ which leads to a right-invariant metric $(-,-):TG\otimes TG\to\Bbb R$ defined by 
\begin{align*}
(X,Y):=\langle\theta(X),\theta(Y)\rangle,
\end{align*}
where $X,Y\in T_gG$ and $\theta:TG\to\mathfrak g$ is the right Mauer-Cartan form given by
\begin{align*}
      X\mapsto {r_{g^{-1}}}_*X
\end{align*}
for $X\in T_gG$. Recall that the energy functional of a smooth curve $c:[a,b]\to G$ is given by
\begin{align*}
    E(c)=\frac12\int_a^b(c'(t),c'(t))dt.
\end{align*}

\begin{theorem}
    A curve $c:[a,b]\to G$ is a geodesic with respect to the right-invariant metric on $G$ if and only if 
    \begin{align*}
             \frac d{dt}m(t)=\text{\emph{ad}}^*_{A^{-1}(m(t))}m(t)
    \end{align*}
where $m(t)=A(\theta(c'(t)))$ and $\text{\emph{ad}}^*$ represents the coadjoint action of $\mathfrak g$ on $\mathfrak g$.
\end{theorem}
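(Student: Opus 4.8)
The plan is to derive the Euler equation by computing the first variation of the energy functional and recognizing that stationarity is equivalent to the stated $\mathrm{ad}^*$-flow. I would work with the right Maurer--Cartan form $\theta$ throughout, since the metric is right-invariant, and set $u(t) := \theta(c'(t)) \in \mathfrak{g}$, so that $m(t) = A(u(t))$.

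First I would set up a variation: let $c_s:[a,b]\to G$ be a smooth family of curves with $c_0 = c$ and fixed endpoints, and put $u(s,t) := \theta(\partial_t c_s(t))$ and $w(s,t) := \theta(\partial_s c_s(t))$, both valued in $\mathfrak{g}$. The key structural identity is the zero-curvature / flatness relation for the Maurer--Cartan form: $\partial_s u - \partial_t w = \pm[u,w]$ (with the sign fixed by the convention that $\theta$ is the \emph{right} Maurer--Cartan form; I would pin it down by a local computation $c_s(t) = \exp(\text{small})\cdot(\ldots)$ or by differentiating $\partial_s\partial_t c = \partial_t\partial_s c$ after translating to $\mathfrak g$). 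Then I would compute
\begin{align*}
\left.\frac{d}{ds}\right|_{s=0} E(c_s) = \int_a^b \langle A(u), \partial_s u\rangle\, dt = \int_a^b \langle A(u), \partial_t w + [u,w]\rangle\, dt,
\end{align*}
using symmetry of $A$ in the first equality is not needed, but bilinearity and $t$-independence of $\langle-,-\rangle$ are.

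Next I would integrate by parts in $t$ in the first term; the boundary term vanishes because $w(\cdot,a)=w(\cdot,b)=0$ (fixed endpoints), giving $-\int_a^b \langle \frac{d}{dt}A(u), w\rangle\,dt$. For the second term I would use the definition of the coadjoint action: $\langle A(u), [u,w]\rangle = \langle \mathrm{ad}^*_u A(u), w\rangle$ (again modulo a sign convention for $\mathrm{ad}^*$ that must be made consistent with the statement; the statement writes $\mathrm{ad}^*_{A^{-1}(m)}m$ with $A^{-1}(m) = u$, so I would choose the pairing $\langle \mathrm{ad}^*_\xi \mu, \eta\rangle = \langle\mu, [\xi,\eta]\rangle$ or its negative to match). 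Collecting, $\frac{d}{ds}\big|_{s=0}E(c_s) = \int_a^b \langle -\frac{d}{dt}m + \mathrm{ad}^*_u m,\, w\rangle\, dt$. Since $w$ ranges over an essentially arbitrary compactly supported $\mathfrak g$-valued path (any such $w$ is realized by some endpoint-fixing variation — I would note this reachability, e.g. $c_s(t)$ defined by integrating the time-dependent field $w$), the fundamental lemma of the calculus of variations forces $\frac{d}{dt}m(t) = \mathrm{ad}^*_{A^{-1}(m(t))}m(t)$, and conversely this relation makes the first variation vanish, so $c$ is a geodesic.

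The main obstacle I anticipate is purely bookkeeping rather than conceptual: getting all three sign conventions mutually consistent — the sign in the flatness identity for the \emph{right} (as opposed to left) Maurer--Cartan form, the sign in the definition of $\mathrm{ad}^*$, and the sign in the bracket used to define $\mathrm{ad}$ — so that the final equation comes out exactly as $\frac{d}{dt}m = \mathrm{ad}^*_{A^{-1}(m)}m$ with a plus sign. A secondary technical point is justifying that arbitrary variation fields $w$ with $w(a)=w(b)=0$ are attained by genuine variations $c_s$ on a finite-dimensional or well-behaved infinite-dimensional Lie group; for the applications in this paper (diffeomorphism groups, Virasoro-type extensions) one typically takes this for granted or works formally, and I would do likewise, remarking that the computation is the standard Euler--Arnold derivation and referring to \cite{1} for the analytic subtleties.
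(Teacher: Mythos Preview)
The paper does not actually prove this theorem: it is stated in Section~2 as background (``fundamental knowledge of the Euler equations'') with the reader directed to \cite{1} for details. So there is no proof in the paper to compare against.

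Your proposed argument is the standard Euler--Arnold variational derivation and is correct in outline. One small correction: you write that ``symmetry of $A$ in the first equality is not needed,'' but in fact it is. From $E(c_s)=\tfrac12\int_a^b A(u)(u)\,dt$ one gets, upon differentiating in $s$, the symmetrized expression $\tfrac12\int_a^b\big(A(\partial_s u)(u)+A(u)(\partial_s u)\big)\,dt$, and it is precisely the inertia-operator condition $A(x)(y)=A(y)(x)$ that collapses this to $\int_a^b A(u)(\partial_s u)\,dt$. This is a bookkeeping point and does not affect the rest of your argument.

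For the record, with the convention used later in the paper (e.g.\ in the Burgers and KdV computations) one has $\langle\mathrm{ad}^*_\xi\mu,\eta\rangle=-\langle\mu,\mathrm{ad}_\xi\eta\rangle$, and the right Maurer--Cartan flatness relation reads $\partial_s u-\partial_t w=-[u,w]$; these two signs combine to give exactly $\frac{d}{dt}m=\mathrm{ad}^*_{u}m$ with the plus sign, matching the statement. Your remark that the sign-tracking is the only real hazard is accurate.
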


\noindent Let $G$ be a Lie group and $K$ its subgroup. Consider the space $G/K$ of right cosets $\{Kg\mid g\in G\}$ on which the group $G$ acts on the right.

\begin{definition}
\normalfont
   An linear operator $A:\mathfrak g\to\mathfrak g^*$ is called a \textbf{\emph{degenerate inertia operator}} if
   \begin{align*}
      A(u)(v)=A(v)(u)
   \end{align*}
   for any $u,v\in\mathfrak g$.
\end{definition}

\noindent Let $\langle -,-\rangle:\mathfrak g\times\mathfrak g\to\Bbb R$ be the degenerate inner product given by
\begin{align*}
       (u,v)\mapsto A(u)(v)
\end{align*}
for $u,v\in\mathfrak g$ and $(-,-)_G:TG\otimes TG\to\Bbb R$ the corresponding degenerate right-invariant metric given by
\begin{align*}
    (X,Y)\mapsto \langle \theta(X),\theta(Y)\rangle
\end{align*}
for $X,Y\in T_gG$.

\begin{theorem}
    The degenerate right-invariant metric $(-,-)_G$ on $G$ descends to a metric on $G/K$ if $\ker A=\mathfrak k$ and
    \begin{align}\label{09253}
        A(\text{\emph{Ad}}_ku)(\text{\emph{Ad}}_kv)=A(u)(v)
    \end{align}
    for all $k\in\mathfrak k$ and $u,v\in\mathfrak g$.
\end{theorem}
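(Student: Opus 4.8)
The plan is to show that the degenerate right-invariant metric $(-,-)_G$ is (i) well-defined as a genuine metric on the quotient tangent spaces, i.e.\ its radical in each $T_gG$ is exactly the tangent space to the fiber of $\pi:G\to G/K$, and (ii) invariant under the deck action of $K$, so that it pushes forward along $\pi$. Concretely, I would fix $g\in G$, identify $T_gG\cong\mathfrak g$ via the right Maurer--Cartan form $\theta$, and note that under this identification the kernel of the fiber projection $d\pi_g:T_gG\to T_{Kg}(G/K)$ is precisely $\mathfrak k\subset\mathfrak g$ (since the fiber through $g$ is $Kg$, and $r_{g^{-1}}$ carries its tangent space to $T_eK=\mathfrak k$). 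So for a tangent vector $X\in T_gG$ with $\theta(X)=u$, we have $(X,X)_G=\langle u,u\rangle=A(u)(u)$, and the hypothesis $\ker A=\mathfrak k$ says exactly that this degenerate form on $\mathfrak g$ has radical $\mathfrak k$; hence it descends to a nondegenerate bilinear form on $\mathfrak g/\mathfrak k\cong T_{Kg}(G/K)$.

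The second step is to check compatibility of these fiberwise forms along a fiber, which is what condition \eqref{09253} buys us. If $g$ and $g'=kg$ lie in the same fiber, I would compare the two identifications $T_{Kg}(G/K)\cong\mathfrak g/\mathfrak k$ obtained by trivializing at $g$ versus at $g'$. The change-of-trivialization map on $T_gG$ is right translation $r_k$ (a deck transformation), and under the Maurer--Cartan identification $r_k$ acts on $\mathfrak g$ by $\mathrm{Ad}_k$ (this is the standard relation $\theta\circ (r_k)_* = \mathrm{Ad}_k\circ\theta$ for the \emph{right} Maurer--Cartan form, up to the conventions fixed earlier in the paper). Then \eqref{09253}, namely $A(\mathrm{Ad}_k u)(\mathrm{Ad}_k v)=A(u)(v)$ for $k\in K$ and $u,v\in\mathfrak g$, says precisely that the two induced forms on $\mathfrak g/\mathfrak k$ agree. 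Therefore the assignment $Kg\mapsto(\text{induced form on }T_{Kg}(G/K))$ is independent of the choice of representative $g$, and since the original metric on $G$ is smooth and $\pi$ is a submersion, the descended field of bilinear forms is smooth. This produces a well-defined (nondegenerate) Riemannian metric on $G/K$, and by construction $\pi$ becomes a Riemannian submersion with totally geodesic-irrelevant fibers — that is, $(-,-)_G=\pi^*(\text{descended metric})$.

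One subtlety I would be careful about: the statement writes \eqref{09253} "for all $k\in\mathfrak k$", but the condition must be read as holding for all $k\in K$ (group elements), since $\mathrm{Ad}_k$ only makes sense for group elements; I would either interpret it that way or, if $K$ is connected, note that the infinitesimal version $A(\mathrm{ad}_\xi u)(v)+A(u)(\mathrm{ad}_\xi v)=0$ for $\xi\in\mathfrak k$ integrates to it. The main obstacle — really the only place where something could go wrong — is pinning down the correct convention for how $r_k$ acts on $\mathfrak g$ via the \emph{right} Maurer--Cartan form and making sure that the adjoint action appearing there is exactly the one in \eqref{09253} (as opposed to $\mathrm{Ad}_{k^{-1}}$). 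Once the signs and sides match the conventions set up before Theorem~3, the rest is a direct unwinding of definitions: the hypothesis $\ker A=\mathfrak k$ handles nondegeneracy of the descended form, and the $\mathrm{Ad}$-invariance \eqref{09253} handles representative-independence, which together are exactly what it means for a metric to descend along $\pi:G\to G/K$.
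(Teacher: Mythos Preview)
The paper states this theorem without proof; it is quoted as background material in Section~2 (the surrounding discussion cites \cite{1} for Euler equations on homogeneous spaces), and the text immediately following the theorem simply records the formula for the descended metric via lifts. So there is no proof in the paper to compare against.

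Your plan is the standard argument and is correct: the hypothesis $\ker A=\mathfrak k$ ensures that the radical of the degenerate form on each $T_gG$ (identified with $\mathfrak g$ via $\theta$) is exactly the vertical subspace $\ker d\pi_g$, so the form factors to a nondegenerate form on $T_{Kg}(G/K)\cong\mathfrak g/\mathfrak k$; and the $\text{Ad}$-invariance condition guarantees this quotient form is independent of the representative $g$ in the coset $Kg$. You are also right to flag the typo in the statement: the condition must be read as holding for all $k\in K$ (group elements), since $\text{Ad}_k$ is only defined for $k\in K$, not for $k\in\mathfrak k$ as written. Your remark that for connected $K$ this is equivalent to the infinitesimal condition $A(\text{ad}_\xi u)(v)+A(u)(\text{ad}_\xi v)=0$ for $\xi\in\mathfrak k$ is exactly the computation the paper performs just after the next theorem to check that the right-hand side of the Euler equation is well-defined.
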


\noindent Note that in metric $(-,-):T(G/K)\otimes T(G/K)\to\Bbb R$ induced by $(-,-)_G$ is given by
\begin{align} \label{09251}
        (X,Y)\mapsto (\tilde X,\tilde Y)_G
\end{align}
for $X,Y\in T_{Kg}(G/K)$, where $\tilde X,\tilde Y\in T_gG$ are some lifts of $X,Y$.

\begin{theorem}
    A curve $c:[a,b]\to G/K$ is a geodesic with respect to the metric given by (\ref{09251}) if and only if there exists a lift $\tilde c:[a,b]\to G$ such that
    \begin{align}\label{09252}
        \frac d{dt}m(t)=\text{\emph{ad}}^*_{A^{-1}(m(t))}m(t)
    \end{align}
    where $m(t)=A(\theta(\tilde c'(t)))$.
\end{theorem}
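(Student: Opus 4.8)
The plan is to reduce the geodesic equation on $G/K$ to the already-established Euler equation on $G$ (the second Theorem in the excerpt, via the first Theorem) by carefully tracking how geodesics on $G$ project to geodesics on $G/K$ under the Riemannian submersion $\pi: G \to G/K$. The key observation is that Theorem on descent guarantees $\pi$ is a Riemannian submersion for the degenerate metric (once restricted to horizontal directions, which are well-defined since $\ker A = \mathfrak{k}$), and a standard fact from Riemannian geometry is that a curve in the base is a geodesic if and only if it admits a horizontal lift that is a geodesic upstairs. So I would split the proof into the two implications.

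For the ``if'' direction, suppose $\tilde c: [a,b] \to G$ is a lift of $c$ satisfying $(\ref{09252})$ with $m(t) = A(\theta(\tilde c'(t)))$. By the first Theorem, this is exactly the condition for $\tilde c$ to be a geodesic with respect to the (possibly only positive-semidefinite, but the proof of that Theorem uses only the inertia-operator identity $A(u)(v) = A(v)(u)$, which still holds) right-invariant metric $(-,-)_G$ on $G$. One then checks that the energy of $c$ in $G/K$ equals the energy of $\tilde c$ in $G$: this follows from $(\ref{09251})$ together with the fact that $(\ref{09252})$, being the Euler equation, describes critical points of the energy functional, and a variation of $c$ can always be lifted to a variation of $\tilde c$ (lifting the endpoints trivially and using a connection, i.e. a horizontal distribution, to lift the interpolating paths). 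Concretely: given a variation $c_s$ of $c = c_0$ with fixed endpoints, lift to $\tilde c_s$ with $\tilde c_0 = \tilde c$ and fixed endpoints in $G$; then $E_{G/K}(c_s) \le E_G(\tilde c_s)$ pointwise in the integrand by $(\ref{09251})$ (any lift has semi-norm at least that of the projection when the vertical part is killed — here actually equal if we pick the lift adapted to $c_s$), and $E_G(\tilde c_s)$ has a critical point at $s=0$ since $\tilde c$ is a geodesic. Hence $c$ is a geodesic.

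For the ``only if'' direction, suppose $c: [a,b] \to G/K$ is a geodesic. Pick any lift $\tilde c_0$ of $c$ to $G$ with the correct endpoints; I would then want to modify it within its fibre to a lift $\tilde c$ whose velocity $\theta(\tilde c'(t))$ lies in a fixed complement of $\mathfrak{k} = \ker A$ in $\mathfrak{g}$ — or, failing a global complement, use the horizontal-lift construction from a principal-connection-type argument on the $K$-bundle $G \to G/K$. Then $\tilde c$ is horizontal, $E_G(\tilde c) = E_{G/K}(c)$ by $(\ref{09251})$, and I claim $\tilde c$ is a geodesic in $G$: any variation $\tilde c_s$ of $\tilde c$ projects to a variation $c_s = \pi \circ \tilde c_s$ of $c$, and decomposing $\tilde c_s'$ into horizontal and vertical parts, $E_G(\tilde c_s) = E_{G/K}(c_s) + (\text{vertical contribution}) \ge E_{G/K}(c_s) \ge E_{G/K}(c)$, with equality at $s=0$; since the vertical contribution is non-negative and vanishes at $s=0$, and $E_{G/K}(c_s)$ is critical at $s=0$, $E_G(\tilde c_s)$ is critical at $s=0$ too, so $\tilde c$ is a geodesic in $G$. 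Then the first Theorem gives exactly $(\ref{09252})$ with $m(t) = A(\theta(\tilde c'(t)))$.

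The main obstacle I anticipate is the degeneracy of the metric: since $(-,-)_G$ is only positive-semidefinite, ``geodesic'' and ``critical point of energy'' are subtler than in the Riemannian case, and the decomposition $E_G(\tilde c_s) = E_{G/K}(c_s) + (\text{vertical})$ requires that the cross-term between horizontal and vertical velocity components vanishes — this is precisely where the invariance hypothesis $(\ref{09253})$, $A(\mathrm{Ad}_k u)(\mathrm{Ad}_k v) = A(u)(v)$, is needed, guaranteeing that the semi-inner product is $K$-invariant and hence that $\langle \theta(X), \theta(Y)\rangle$ depends only on the horizontal parts when one argument is horizontal and the metric kills the vertical directions. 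I would spend the bulk of the write-up verifying this cross-term vanishing and the well-definedness of the horizontal lift, citing \cite{1} for the analogous computations in the classical (non-degenerate, or Virasoro) setting.
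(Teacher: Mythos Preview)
The paper does not actually prove this theorem: it states it, implicitly defers to \cite{1}, and follows it only with the short computation showing that $\mathrm{ad}^*_{A^{-1}(m)}m$ is independent of the choice of preimage $A^{-1}(m)\in\mathfrak g$ (i.e.\ that $\mathrm{ad}^*_V A(a)=0$ for $V\in\mathfrak k$). So there is no paper proof to match against; what follows is an assessment of your sketch on its own.

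Your submersion strategy is the right one and would go through, but you are making it harder than necessary. The key identity is not an inequality but an \emph{equality}: because $\ker A=\mathfrak k$ and $A$ is symmetric, for any $u\in\mathfrak g$ with decomposition $u=u_h+u_v$, $u_v\in\mathfrak k$, one has $\langle u,u\rangle=A(u_h)(u_h)+2A(u_v)(u_h)+A(u_v)(u_v)=A(u_h)(u_h)$. Hence $E_G(\tilde c)=E_{G/K}(\pi\tilde c)$ for \emph{every} curve $\tilde c$ in $G$, with no vertical remainder. Consequently your horizontal-lift construction and the ``cross-term vanishing via $(\ref{09253})$'' discussion are superfluous: any lift $\tilde c$ of $c$ is a critical point of $E_G$ among curves with fixed endpoints in $G$ if and only if $c$ is a critical point of $E_{G/K}$, simply because the two functionals agree after composition with $\pi$ and every fixed-endpoint variation in $G$ projects to one in $G/K$ (and conversely, fixed-endpoint variations in $G/K$ lift once you fix the initial fibre point). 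The hypothesis $(\ref{09253})$ is used earlier, to make the quotient metric well defined, not here.

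Two smaller points. First, you invoke the first Theorem for degenerate $A$; this is fine, but you should say explicitly that the first-variation formula reads $\frac{d}{ds}\big|_{s=0}E_G(\tilde c_s)=\int(\dot m-\mathrm{ad}^*_{u}m)(\eta)\,dt$ with $m=A(u)$, and that this already lives in $\mathfrak g^*$ and needs no invertibility of $A$ to derive---invertibility is only used to pass from $m$ back to $u$, which here is handled by the well-definedness remark the paper supplies. Second, you never address that well-definedness point (that $\mathrm{ad}^*_{A^{-1}(m)}m$ is unambiguous despite $A^{-1}$ being multi-valued); this is the one thing the paper \emph{does} spell out, and your write-up should include it.
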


\noindent   Note that the right-hand side of (\ref{09252}) is well-defined. Indeed, by (\ref{09253}), for any $V\in\mathfrak k$ and $a,b\in\mathfrak g$, we have
\begin{align*}
        A(\text{ad}_Va)(b)+A(a)(\text{ad}_Vb)=\frac {d}{dt}\bigg|_{t=0} A(\text{Ad}_{c_V(t)}a)(\text{Ad}_{c_V(t)}b)=0,
\end{align*}
where $c_V$ is a curve in $K$ with initial vector $V$. It follows that
\begin{align*}
   (\text{ad}^*_VA(a))(b)=-A(a)(\text{ad}_Vb)=A(a)(\text{ad}_bV)=-A(\text{ad}_ba)(V)=-A(V)(\text{ad}_ba)=0,
\end{align*}
for any $V\in\mathfrak k$ and $a,b\in\mathfrak g$.

\section{Connection Cochain and Generalized Virasoro Group}

\begin{definition}
\normalfont
    For a central extension $\xymatrix{1 \ar[r] &  A \ar[r] &  \tilde G \ar[r]^\pi &  G \ar[r] &  1}$, a cochain $\tau:\tilde G\to A$ that satisfies the condition 
    \begin{align*}
        \tau(a\tilde g)=a+\tau(\tilde g)
    \end{align*}
    for all $\tilde g\in\tilde G$ and $a\in A$ is called a \textbf{\emph{connection cochain}}.
\end{definition}

\begin{proposition}
   There exists a $2$-cocycle $\sigma$ on $G$ such that $\delta\tau(\tilde g,\tilde h)=\sigma(g,h)$ for any $g,h\in G$. We call $\sigma$ the \textbf{\emph{curvature}} of a connection cochain.
\end{proposition}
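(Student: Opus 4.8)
The plan is to unwind the definitions of the coboundary operator on group cochains and the connection cochain, and to verify directly that the resulting map is a 2-cocycle on $G$. First I would recall that for a cochain $\tau:\tilde G\to A$, the coboundary is the 2-cochain on $\tilde G$ given by
\begin{align*}
    \delta\tau(\tilde g,\tilde h)=\tau(\tilde g)+\tau(\tilde h)-\tau(\tilde g\tilde h),
\end{align*}
writing $A$ additively. The key observation is that $\delta\tau$ is constant on the fibers of $\pi\times\pi$. Indeed, any two lifts of $g$ differ by an element $a\in A$, and the connection cochain property $\tau(a\tilde g)=a+\tau(\tilde g)$ together with the centrality of $A$ (so that $(a\tilde g)(b\tilde h)=(a+b)(\tilde g\tilde h)$) shows that replacing $\tilde g$ by $a\tilde g$ and $\tilde h$ by $b\tilde h$ changes each of the three terms $\tau(\tilde g)$, $\tau(\tilde h)$, $\tau(\tilde g\tilde h)$ by $a$, $b$, $a+b$ respectively, leaving the alternating sum $\delta\tau(\tilde g,\tilde h)$ unchanged. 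Hence there is a well-defined function $\sigma:G\times G\to A$ with $\sigma(\pi(\tilde g),\pi(\tilde h))=\delta\tau(\tilde g,\tilde h)$, i.e. $\sigma(g,h)=\delta\tau(\tilde g,\tilde h)$ for any choice of lifts $\tilde g,\tilde h$ of $g,h$.

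Next I would check that $\sigma$ is a 2-cocycle on $G$. The standard identity $\delta^2\tau=0$ says that $\delta\tau$ is a 2-cocycle on $\tilde G$, that is,
\begin{align*}
    \delta\tau(\tilde h,\tilde k)-\delta\tau(\tilde g\tilde h,\tilde k)+\delta\tau(\tilde g,\tilde h\tilde k)-\delta\tau(\tilde g,\tilde h)=0.
\end{align*}
Since $\pi$ is a surjective homomorphism, choosing lifts $\tilde g,\tilde h,\tilde k$ of arbitrary $g,h,k\in G$ gives lifts $\tilde g\tilde h$, $\tilde h\tilde k$ of $gh$, $hk$, so the above identity pushes forward verbatim to the cocycle identity
\begin{align*}
    \sigma(h,k)-\sigma(gh,k)+\sigma(g,hk)-\sigma(g,h)=0
\end{align*}
for $\sigma$ on $G$. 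One may also, if desired, normalize $\tau$ so that $\tau(\tilde e)$ lies in a convenient position and note that $\sigma$ is then normalized as well, but this is not needed for the statement.

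I do not expect a serious obstacle here; the only point requiring care is the fiber-independence argument, which is exactly where the defining property of a connection cochain (as opposed to an arbitrary cochain) and the centrality of the extension are used — an arbitrary lift of the coboundary need not descend, and it is precisely $\tau(a\tilde g)=a+\tau(\tilde g)$ that forces the descent. I would therefore present that verification in full and treat the cocycle identity as an immediate transport of $\delta^2=0$ along the surjection $\pi$.
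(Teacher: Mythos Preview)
Your argument is correct and is the standard verification: the connection-cochain identity together with centrality of $A$ forces $\delta\tau$ to be constant on $\pi\times\pi$-fibers, and the cocycle condition for $\sigma$ is then the pushforward of $\delta^2\tau=0$ along the surjection $\pi$. The paper itself states this proposition without proof (the background on connection cochains is deferred to reference~\cite{3}), so there is no paper proof to compare against; what you wrote is exactly the expected justification.
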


\begin{proposition}
          Define the \textbf{\emph{Euler cocycle}} $\chi:G\times G\to A$ by
          \begin{align*}
             \chi(g_1,g_2)=s(g_1)s(g_2)s(g_1g_2)^{-1}
          \end{align*}
          where $s:G\to\tilde G$ is a section of $\pi:\tilde G\to G$. Then, we have $[\chi]=-[\sigma]$ in $H^2(G,A)$.
\end{proposition}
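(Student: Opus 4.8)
The plan is to express both $\sigma$ and $\chi$ through the single $1$-cochain $\mu:=\tau\circ s:G\to A$ and exhibit an explicit coboundary relating them. First I would record the elementary observation that $\chi$ really takes values in $A$: since $\pi$ is a homomorphism and $\pi\circ s=\mathrm{id}_G$, we get $\pi\big(s(g_1)s(g_2)s(g_1g_2)^{-1}\big)=e$, so $\chi(g_1,g_2)\in\ker\pi=A$; and because $A$ is central in $\tilde G$ this rearranges to
\begin{align*}
   s(g_1)s(g_2)=\chi(g_1,g_2)\,s(g_1g_2).
\end{align*}

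Next I would evaluate the coboundary $\delta\tau$ on the lifts $s(g_1),s(g_2)$. Writing the (inhomogeneous) group-cohomology differential of the $1$-cochain $\tau$ additively in $A$, $\delta\tau(\tilde g,\tilde h)=\tau(\tilde h)-\tau(\tilde g\tilde h)+\tau(\tilde g)$, and substituting $\tilde g=s(g_1)$, $\tilde h=s(g_2)$, the middle term becomes $\tau\big(s(g_1)s(g_2)\big)=\tau\big(\chi(g_1,g_2)\,s(g_1g_2)\big)$. Here the defining property of a connection cochain, $\tau(a\tilde g)=a+\tau(\tilde g)$, applies with $a=\chi(g_1,g_2)$ and gives $\tau\big(s(g_1)s(g_2)\big)=\chi(g_1,g_2)+\mu(g_1g_2)$. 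Collecting the three terms yields
\begin{align*}
   \delta\tau\big(s(g_1),s(g_2)\big)=(\delta\mu)(g_1,g_2)-\chi(g_1,g_2),
   \qquad (\delta\mu)(g_1,g_2)=\mu(g_2)-\mu(g_1g_2)+\mu(g_1).
\end{align*}

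On the other hand, by the preceding proposition the quantity $\delta\tau(\tilde g,\tilde h)$ depends only on $\pi(\tilde g),\pi(\tilde h)$ and equals $\sigma\big(\pi(\tilde g),\pi(\tilde h)\big)$; in particular $\delta\tau\big(s(g_1),s(g_2)\big)=\sigma(g_1,g_2)$. Comparing the two expressions for the same quantity gives $\sigma=\delta\mu-\chi$ as $A$-valued $2$-cochains on $G$, hence $\sigma+\chi=\delta\mu$ is a coboundary and $[\chi]=-[\sigma]$ in $H^2(G,A)$. As a byproduct, since $\sigma$ is a $2$-cocycle, so is $\chi$, which is consistent with $\chi$ being the standard extension cocycle attached to the section $s$.

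The computation is short; the only points demanding care are the bookkeeping between the additive notation on $A$ forced by the connection-cochain axiom and the multiplicative notation in the definition of $\chi$, and the appeal to centrality of $A$ to commute $\chi(g_1,g_2)$ past $s(g_1g_2)$. No genuine obstacle arises: if one wishes to be scrupulous about regularity of $s$ (continuous, smooth, or measurable, depending on the category of groups in play), a standard local section is enough, since the conclusion is an identity of cohomology classes rather than of cochains.
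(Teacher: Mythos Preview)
Your argument is correct. The paper itself does not supply a proof of this proposition: it is stated as background material imported from the reference on connection cochains (Moriyoshi, cited as \cite{3} in the paper), so there is no in-text argument to compare against. Your approach---defining $\mu=\tau\circ s$, using the connection-cochain axiom to expand $\tau(s(g_1)s(g_2))=\chi(g_1,g_2)+\mu(g_1g_2)$, and invoking the previous proposition to identify $\delta\tau(s(g_1),s(g_2))$ with $\sigma(g_1,g_2)$---is the standard one and yields the explicit coboundary $\sigma+\chi=\delta\mu$. The remarks about centrality of $A$ and the additive/multiplicative bookkeeping are exactly the small points one needs to track, and you have done so.
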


\begin{proposition} \label{09256}
   Let $s:G\to\tilde G$ be a section, which gives rise to an Euler cocycle $\chi$, and $\tau:\tilde G\to A$ a connection cochain defined by
   \begin{align*}
      \tilde g\mapsto \tilde gs(g)^{-1}.
   \end{align*}
   Then, we have $\chi=-\sigma$.
\end{proposition}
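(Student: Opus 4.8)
The plan is to compute the curvature $\sigma$ of the connection cochain $\tau(\tilde g)=\tilde gs(g)^{-1}$ directly from its defining property $\delta\tau(\tilde g,\tilde h)=\sigma(g,h)$ and to recognise the outcome as $\chi^{-1}$, i.e. $-\chi$ in additive notation. The one structural fact that makes everything go through is that $\tau$ takes values in the \emph{central} subgroup $A$, so each $\tau(\tilde g)$ commutes with every element of $\tilde G$; the argument is then purely a bookkeeping rearrangement, with no analytic content.

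First I would check that $\tau$ is a legitimate connection cochain: since $\pi(\tilde gs(g)^{-1})=gg^{-1}=e$ we have $\tau(\tilde g)\in A$, and for $a\in A$ one has $\pi(a\tilde g)=g$, so $\tau(a\tilde g)=a\tilde gs(g)^{-1}=a+\tau(\tilde g)$ by centrality. Rewriting the definition as $\tilde g=\tau(\tilde g)s(g)$, I would then evaluate $\tilde g\tilde h$ in two ways. On one hand $\tilde g\tilde h=\tau(\tilde g\tilde h)s(gh)$; on the other hand, pulling the central factor $\tau(\tilde h)$ to the left,
\begin{align*}
  \tilde g\tilde h=\tau(\tilde g)s(g)\,\tau(\tilde h)s(h)=\tau(\tilde g)\tau(\tilde h)\,s(g)s(h).
\end{align*}
Comparing the two expressions gives $\tau(\tilde g\tilde h)=\tau(\tilde g)\tau(\tilde h)\,s(g)s(h)s(gh)^{-1}=\tau(\tilde g)\tau(\tilde h)\,\chi(g,h)$, and since all the displayed factors lie in $A$,
\begin{align*}
  \sigma(g,h)=\delta\tau(\tilde g,\tilde h)=\tau(\tilde g)\tau(\tilde h)\tau(\tilde g\tilde h)^{-1}=\chi(g,h)^{-1},
\end{align*}
which is exactly the asserted identity $\sigma=-\chi$. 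Note that the right-hand side manifestly depends only on $g,h$, consistent with the earlier proposition that produced $\sigma$.

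The computation is short, so I do not expect a genuine obstacle; the only delicate points are organisational rather than conceptual. They are: (i) keeping the sign and ordering conventions for $\delta$ and for $H^2(G,A)$ aligned with the preceding propositions, so that the conclusion reads ``$\chi=-\sigma$'' and not ``$\chi=\sigma$''; and (ii) making sure that every reordering of factors inside $\tilde G$ invokes only the centrality of the $A$-valued terms, since $\tilde G$ itself is noncommutative. Conceptually this proposition is the cocycle-level sharpening of the preceding statement $[\chi]=-[\sigma]$: fixing the particular connection cochain $\tau=(\tilde g\mapsto\tilde gs(g)^{-1})$ attached to the chosen section $s$ upgrades the equality of cohomology classes to an equality of cocycles.
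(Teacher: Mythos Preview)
The paper states this proposition without proof, so there is nothing to compare against; your argument is correct and is the natural one. The verification that $\tau$ lands in $A$ and satisfies $\tau(a\tilde g)=a+\tau(\tilde g)$, followed by the two evaluations of $\tilde g\tilde h$ using $\tilde g=\tau(\tilde g)s(g)$ and centrality of $A$, yields exactly $\delta\tau=-\chi$ and hence $\sigma=-\chi$.
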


\begin{remark} \label{09257}
\normalfont
    Let $B$ be an abelian group with a homomorphism $\iota:A\to B$. A map $\tau:\tilde G\to B$ satisfying
    \begin{align*}
        \tau(a\tilde g)=\tau(\tilde g)+\iota(a)
    \end{align*}
    is called a connection cochain with values in $B$.  Given a central extension
    \begin{align*}
       \xymatrix{1 \ar[r] &  A \ar[r] &  \tilde G \ar[r] &  G \ar[r] &  1},
    \end{align*}
    we can extend it to a central extension
    \begin{align}\label{09254}
        \xymatrix{1 \ar[r] &  B \ar[r] &  \tilde G\times _{A,\iota}B \ar[r] &  G \ar[r] &  1}.
    \end{align}
    The multiplication in $\tilde G\times_{A,\iota}B$ is given by
    \begin{align*}
       [\tilde g_1,b_1][\tilde g_2,b_2]=[\tilde g_1\tilde g_2,b_1+b_2].
    \end{align*}
    Define $\tau_B:\tilde G\times_{A,\iota}B\to B$ by
    \begin{align*}
       [\tilde g,b]\mapsto \tau(\tilde g)+b.
    \end{align*}
    It is well-defined since
    \begin{align*}
       \tau(a\tilde g)+b-\iota(a)=\tau(\tilde g)+b.
    \end{align*}
    Moreover, since
    \begin{align*}
       \tau_B([\tilde g,b]b')=\tau([\tilde g,b+b'])=\tau(\tilde g)+b+b'=\tau_B([\tilde g,b])+b',
    \end{align*}
    we see that $\tau_B$ is a connection cochain of (\ref{09254}).
\end{remark}

\noindent Consider the following central extension:
\begin{align*}
     \xymatrix{
       1 \ar[r] &  \Bbb Z \ar[r]  &   \widetilde{\text{Diff}(S^1)}\ar[r]^\rho&  \text{Diff}(S^1)\ar[r] & 1
     }.
\end{align*}
Here, an element $\tilde f\in\widetilde{\text{Diff}(S^1)}$ is regarded as an orientation-preserving diffeomorphism of $\Bbb R$ such that $\tilde f(x+2\pi)=\tilde f(x)+2\pi$. Then, 
\begin{align*}
     \text{Diff}(S^1)=\widetilde{\text{Diff}(S^1)}/\sim,
\end{align*}
where $\tilde f\sim\tilde g$ if and only if $\tilde f(x)=\tilde g(x)+2n\pi$ for some $n\in\Bbb N$. For $\alpha\in\Bbb R$, define $\alpha:\Bbb Z\to \Bbb R$ by
\begin{align*}
     k\mapsto -2\pi^2 \alpha k.
\end{align*}
Define $\tau^\alpha:\widetilde{\text{Diff}(S^1)}\to\Bbb R$ by
\begin{align*}
   \tau^\alpha(f)=-\frac{\alpha}{2}\int_0^{2\pi}\tilde f(x)dx+\pi^2\alpha.
\end{align*}
It's easy to see that $\tau^\alpha$ is a connection cochain with values in $\Bbb R$, since
\begin{align*}
   \tau^\alpha(\tilde f\circ k)=-\frac{\alpha}{2} \int_0^{2\pi} \tilde f(x+2k\pi)dx+\pi^2\alpha=\tau^\alpha(f)+\alpha(k).
\end{align*}
By Remark \ref{09257}, $\tau^\alpha$ gives rise to a connection cochain $\tau_{\Bbb R}^\alpha:\widetilde{\text{Diff}(S^1)}\times_{\Bbb Z,\alpha}\Bbb R\to\Bbb R$ of the following central extension
\begin{align*}
     \xymatrix{
       1 \ar[r] &  \Bbb R \ar[r]  &   \widetilde{\text{Diff}(S^1)}\times_{\Bbb Z,\alpha}\Bbb R\ar[r]^-\pi&  \text{Diff}(S^1)\ar[r] & 1
     }.
\end{align*}
by $\tau_{\Bbb R}^\alpha([\tilde f,a])=\tau^\alpha(f)+a$. Define $s_\Bbb R^\alpha:\text{Diff}(S^1)\to\widetilde{\text{Diff}(S^1)}\times_{\Bbb Z,\alpha}\Bbb R$ by
\begin{align*}
   f\mapsto [\tilde f,a-\tau_{\Bbb R}^\alpha([\tilde f,a])].
\end{align*}
The section $s_\Bbb R^\alpha$ is well-defined, since
\begin{align*}
    [\tilde f\circ k,a+a'-\tau^\alpha_\Bbb R([\tilde f\circ k,a+a'])]=[\tilde f,a-\tau^\alpha_\Bbb R([\tilde f,a])].
\end{align*}
Moreover, $s^\alpha_\Bbb R$ is smooth since
\begin{align*}
      [\tilde f\circ k,a-\tau_\Bbb R^\alpha([\tilde f\circ k,a])]=[\tilde f,a-\tau_\Bbb R^\alpha([\tilde f,a])].
\end{align*}
Then, $s^\alpha_\Bbb R$ leads to the smooth Euler cocycle $\chi^\alpha_\Bbb R:\text{Diff}(S^1)\times\text{Diff}(S^1)\to\Bbb R$
\begin{align*}
    \chi_\Bbb R^\alpha(f_1,f_2)&=s^\alpha_\Bbb R(f_1,f_2)^{-1}=s^\alpha_\Bbb R(f_1f_2)^{-1}s^\alpha_\Bbb R(f_1)s^\alpha_\Bbb R(f_2) \\
    &=[\tilde f_2^{-1}\circ \tilde f_1^{-1},\tau^\alpha_\Bbb R([\tilde f \circ \tilde f_2,0])][\tilde f_1.-\tau^\alpha_\Bbb R([\tilde f_1,0])][\tilde f_2,-\tau_\Bbb R^\alpha([\tilde f_2,0])] \\
    &=\frac{\alpha}{2}\int_0^{2\pi}(\tilde f_1(x)+\tilde f_2(x)-\tilde f_1\circ\tilde f_2(x))dx-\pi^2\alpha,
\end{align*}
where we have identified $\Bbb R$ with with its image in $\widetilde{\text{Diff}(S^1)}\times_{\Bbb Z,\alpha}\Bbb R$. Note that by Proposition \ref{09256}, we have
\begin{align*}
  \chi^\alpha_\Bbb R(f_1,f_2)=-\delta\tau_\Bbb R^\alpha([\tilde f_1,a_1],[\tilde f_2,a_2]).
\end{align*}

\begin{proposition} \label{092514}
  The \textbf{\emph{generalized Virasoro group}} $\text{Diff}(S^1)\times_{\chi^\alpha_\Bbb R+\beta B}\Bbb R$ is isomorphic to $((\widetilde{\text{Diff}(S^1)}\times_{\Bbb Z,\alpha}\Bbb R)\times_{\beta B_\Bbb R}\Bbb R)/\text{Im }\iota$, where $B_\Bbb R:(\widetilde{\text{Diff}(S^1)}\times_{\Bbb Z,\alpha}\Bbb R)\times (\widetilde{\text{Diff}(S^1)}\times_{\Bbb Z,\alpha}\Bbb R)\to\Bbb R$ is given by
  \begin{align*}
    ( [\tilde f_1,a_1],[\tilde f_2,a_2])\mapsto B(f_1,f_2),
  \end{align*}
  and $\iota:\Bbb R\to (\widetilde{\text{Diff}(S^1)}\times_{\Bbb Z,\alpha}\Bbb R)\times_{\beta B_\Bbb R}\Bbb R$ is given by
  \begin{align*}
           a\mapsto ([\text{\emph{id}},a],-a).
  \end{align*}
\end{proposition}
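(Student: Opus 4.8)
The plan is to construct an explicit surjective homomorphism
\[
\Phi\colon \bigl(\widetilde{\text{Diff}(S^1)}\times_{\Bbb Z,\alpha}\Bbb R\bigr)\times_{\beta B_\Bbb R}\Bbb R \longrightarrow \text{Diff}(S^1)\times_{\chi^\alpha_\Bbb R+\beta B}\Bbb R
\]
whose kernel is exactly $\text{Im }\iota$, and then invoke the first isomorphism theorem. Writing $H=\widetilde{\text{Diff}(S^1)}\times_{\Bbb Z,\alpha}\Bbb R$ and $\pi\colon H\to\text{Diff}(S^1)$ for the projection, I would set
\[
\Phi\bigl([\tilde f,a],c\bigr)=\bigl(f,\ c+\tau^\alpha_\Bbb R([\tilde f,a])\bigr),\qquad f=\pi([\tilde f,a]).
\]
Well-definedness on the fibre product $\widetilde{\text{Diff}(S^1)}\times_{\Bbb Z,\alpha}\Bbb R$ is immediate from the fact (Remark \ref{09257}) that $\tau^\alpha_\Bbb R$ is a bona fide connection cochain on $H$, and one only needs to note that $B_\Bbb R=\pi^*B$, so that the $\beta$-term in the product on $H\times_{\beta B_\Bbb R}\Bbb R$ is literally $\beta B(f_1,f_2)$.

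First I would check that $\Phi$ is a homomorphism. Expanding the two products and cancelling the common $\beta B(f_1,f_2)$ terms, the identity to be verified collapses to
\[
\tau^\alpha_\Bbb R([\tilde f_1\tilde f_2,a_1+a_2])-\tau^\alpha_\Bbb R([\tilde f_1,a_1])-\tau^\alpha_\Bbb R([\tilde f_2,a_2])=\chi^\alpha_\Bbb R(f_1,f_2),
\]
which is precisely the relation $\chi^\alpha_\Bbb R=-\delta\tau^\alpha_\Bbb R$ recorded right after the construction of $\chi^\alpha_\Bbb R$ (and is also visible directly from the closed formula for $\chi^\alpha_\Bbb R$, since $\tau^\alpha_\Bbb R([\tilde f,a])=\tau^\alpha(\tilde f)+a$ and the $a_i$ drop out). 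Next I would compute $\ker\Phi$: if $\Phi([\tilde f,a],c)=(\mathrm{id},0)$ then $[\tilde f,a]\in\ker\pi$, i.e.\ $[\tilde f,a]=[\mathrm{id},b]$ for some $b\in\Bbb R$; since $\tau^\alpha(\mathrm{id})=-\tfrac\alpha2\int_0^{2\pi}x\,dx+\pi^2\alpha=-\pi^2\alpha+\pi^2\alpha=0$, this forces $c+b=0$, whence $\ker\Phi=\{([\mathrm{id},b],-b):b\in\Bbb R\}=\text{Im }\iota$. Surjectivity is clear: given $(f,c)$, choose any lift $\tilde f$ and observe $\Phi([\tilde f,0],\,c-\tau^\alpha(\tilde f))=(f,c)$.

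Before passing to the quotient I would record that $\text{Im }\iota$ is a central subgroup, so that $(H\times_{\beta B_\Bbb R}\Bbb R)/\text{Im }\iota$ is a group: $\iota$ is a homomorphism because $B$ is normalized, $B(\mathrm{id},\cdot)=B(\cdot,\mathrm{id})=0$, and each $([\mathrm{id},b],-b)$ is central in $H\times_{\beta B_\Bbb R}\Bbb R$ for the same reason, using that $[\mathrm{id},b]$ is central in $H$. Hence $\Phi$ descends to a bijective homomorphism $\bar\Phi$ on the quotient. For the Lie-group statement it remains to produce a smooth inverse; here I would use the smooth section $s^\alpha_\Bbb R$ and set $\bar\Psi(f,c)=\bigl[(s^\alpha_\Bbb R(f),c)\bigr]$, which is smooth and satisfies $\bar\Phi\circ\bar\Psi=\mathrm{id}$ because $\pi\circ s^\alpha_\Bbb R=\mathrm{id}$ and $\tau^\alpha_\Bbb R\circ s^\alpha_\Bbb R=0$; since $\bar\Phi$ is bijective, $\bar\Psi=\bar\Phi^{-1}$, so $\bar\Phi$ is an isomorphism of Lie groups.

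The main obstacle is purely organizational: keeping the three nested layers of equivalence classes straight — the $\Bbb Z$-quotient inside $\widetilde{\text{Diff}(S^1)}$, the $\alpha$-fibre product defining $H$, and the $\text{Im }\iota$-quotient — and making the sign conventions in $\chi^\alpha_\Bbb R=-\delta\tau^\alpha_\Bbb R$ line up in the homomorphism check. There is no deeper content beyond this and the normalization of $B$, which is exactly what makes $\text{Im }\iota$ a subgroup in the first place.
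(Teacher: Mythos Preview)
Your proof is correct and follows essentially the same approach as the paper: the map you call $\Phi$ is exactly the paper's $\Psi$ (before quotienting), your inverse $\bar\Psi$ is the paper's $\Phi$, and the crux of the homomorphism check in both is the identity $\chi^\alpha_\Bbb R=-\delta\tau^\alpha_\Bbb R$. The only cosmetic difference is that you package the argument via the first isomorphism theorem (kernel $+$ surjectivity) and then address smoothness, whereas the paper verifies directly that the two maps are mutually inverse.
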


\begin{proof}  Note that $\text{Im }\iota$ is central in $(\widetilde{\text{Diff}(S^1)}\times_{\Bbb Z,\alpha}\Bbb R)\times_{\beta B_\Bbb R}\Bbb R$, since 
\begin{align*}
    ([\text{id},a]-a)([\tilde f,b],c)=([\tilde f,a+b],c-a)=([\tilde f,b],c)([\text{id},a],-a).
\end{align*}
Therefore, $((\widetilde{\text{Diff}(S^1)}\times_{\Bbb Z,\alpha}\Bbb R)\times_{\beta B_\Bbb R}\Bbb R)/\text{Im }\iota$ is indeed a Lie group. Define $\Psi:((\widetilde{\text{Diff}(S^1)}\times_{\Bbb Z,\alpha}\Bbb R)\times_{\beta B_\Bbb R}\Bbb R)/\text{Im }\iota\to\text{Diff}(S^1)\times_{\chi^\alpha_\Bbb R+\beta B}\Bbb R$ by
\begin{align*}
    [[\tilde f,a],b]\mapsto (f,b+\tau^\alpha_\Bbb R([\tilde f,a])).
\end{align*}
Since
\begin{align*}
   \Psi([[\tilde f_1,a_1],b_1][[\tilde f_2,a_2],b_2])&=\Psi([[\tilde f_1\circ \tilde f_2,a_1+a_2],b_1+b_2+\beta B_\Bbb R([\tilde f_1,a_1],[\tilde f_2,a_2])]) \\
   &=(f_1\circ f_2,b_1+b_2+\beta B(f_1,f_2)+\tau_\Bbb R^\alpha([\tilde f_1,a_1],[\tilde f_2,a_2])) \\
   &=(f_1\circ f_2,b_1+b_2+\beta B(f_1,f_2)+\chi_\Bbb R^\alpha(f_1,f_2)+\tau_\Bbb R^\alpha([\tilde f_1,a_1])+\tau_\Bbb R^\alpha([\tilde f_2,a_2])) \\
   &=(f_1,b_1+\tau_\Bbb R^\alpha([\tilde f_1,a_1]))(f_2,b_2+\tau^\alpha_\Bbb R([\tilde f_2,a_2])) \\
   &=\Psi([[\tilde f_1,a_1],b_1])\tau_\Bbb R^\alpha([[\tilde f_2,a_2],b_2]),
\end{align*}
we see that $\Psi$ is a homomorphism. On the other hand, define $\Phi:\text{Diff}(S^1)\times_{\chi_\Bbb R^\alpha+\beta B}\Bbb R\to ((\widetilde{\text{Diff}(S^1)}\times_{\Bbb Z,\alpha}\Bbb R)\times_{\beta B_\Bbb R}\Bbb R)/\text{Im }\iota$ by
\begin{align*}
   (f,b)\mapsto[s^\alpha_\Bbb R(f),b].
\end{align*}
It's easy to see that
\begin{align*}
   \Phi\circ\Psi([\tilde f,a],b)=\Phi(f,b+\tau_\Bbb R^\alpha([\tilde f,a]))=[[\tilde f,a-\tau^\alpha_\Bbb R([\tilde f,a])],b+\tau_\Bbb R^\alpha([\tilde f,a])]=[[\tilde f,a],b],
\end{align*}
and
\begin{align*}
   \Psi\circ\Phi(f,b)=\Psi([s_\Bbb R^\alpha(f),b])=(f,b+\tau_\Bbb R^\alpha(s_\Bbb R^\alpha(f)))=(f,b),
\end{align*}
which implies the claim.
\end{proof}

\noindent Recall that the Lie algebraic Euler cocycle $e:\mathfrak X(S^1)\times \mathfrak X(S^1)\to \Bbb R$ is defined by
\begin{align*}
      e(u\frac d{dx},v\frac d{dx})=\int_{S^1}uv'dx.
\end{align*}
Since
\begin{align*}
    \frac {d^2}{dtds}\bigg|_{t=s=0}\chi_\Bbb R^\alpha(c_{u\frac d{dx}}(-,t),c_{v\frac d{dx}}(-,s))&=-\frac {d^2}{dtds}\bigg|_{t=s=0} \frac \alpha 2\int_0^{2\pi} c_{u\frac d{dx}}(c_{v\frac d{dx}}(x,s),t)dx =\frac\alpha2\int_{S^1}uv'dx,
\end{align*}
we see that
\begin{align*}
   \text{Lie}(\text{Diff}(S^1)\times_{\chi_\Bbb R^\alpha+\beta B}\Bbb R)=\chi(S^1)\times_{\alpha e+\beta\omega}\Bbb R
\end{align*}
where $\omega$ denotes the Gelfand-Fuchs cocycle.

\section{Burgers Equation}

\noindent Let $G=\text{Diff}(S^1)$ be the orientation-preserving diffeomorphism group of $S^1$, whose Lie algebra is $\mathfrak X(S^1)$. The dual space of $\mathfrak X(S^1)$ is regarded as $\{udx\otimes dx\mid u\in C^\infty(S^1)\}$ with the following pair
\begin{align*}
      ( udx\otimes dx)(v\frac d{dx})=\int_{S^1}uvdx.
\end{align*}
Define $A:\mathfrak X(S^1)\to\mathfrak X(S^1)^*$ be the inertia operator by
\begin{align*}
       u\frac{d}{dx}\mapsto udx\otimes dx.
\end{align*}
Let $\langle-,-\rangle:\mathfrak X(S^1)\times\mathfrak X(S^1)$ be the inner product associated with $A$ and $(-,-)$ the corresponding right-invariant metric. Since
\begin{align*}
     (\text{ad}^*_{u\frac d{dx}}(vdx\otimes dx))(w\frac d{dx})&=-(vdx\otimes dx)(\text{ad}_{u\frac d{dx}}w\frac d{dx})=-(vdx\otimes dx)((u'w-uw')\frac d{dx}) \\
     &=\int_{S^1} (-2u'v-uv')wdx \\
     &=((-2u'v-uv')dx\otimes dx)(w\frac d{dx}),
\end{align*}
we obtain the Euler equation of $\text{Diff}(S^1)$ with respect to the right-invariant metric $(-,-)$:
\begin{align} \label{09257}
         \frac d{dt} (udx\otimes dx)=\text{ad}^*_{A^{-1}(udx\otimes dx)}(udx\otimes dx)=\text{ad}^*_{u\frac d{dx}}(udx\otimes dx)=-3u'u dx\otimes dx,
\end{align}
where $udx\otimes dx$ is a curve on $\mathfrak X(S^1)^*$. For simplicity, we write  (\ref{09257})  as
\begin{align} \label{092510}
      u_t=-3u_xu,
\end{align} 
called the \textbf{\emph{Burgers equation}}. For $c$ be a curve on $\text{Diff}(S^1)$, since
\begin{align} \label{09259}
     u(-,t)={r_{c(-,t)^{-1}}}_* c_t(-,t)=\frac d{ds}\bigg|_{s=t} c(-,s)\circ c(-,t)^{-1}=c_t(-,t)\circ c(-,t)^{-1},
\end{align}
substituting (\ref{09259}) into (\ref{092510}), we obtain the geodesic equation
\begin{align} \label{092511}
     2c_{tx}c_t+c_{tt}c_x=0.
\end{align}

\begin{proposition}
     The curve $\exp(tX)$ is a geodesic if and only if $X=\text{const}\ne0$. 
\end{proposition}

\begin{proof}
   Since 
   \begin{align*}
      \frac d{dt} \varphi_{\tilde X}(1_G,st)=s\tilde X_{\varphi_{\tilde X}(1_G,st)}=\widetilde{sX}_{\varphi_{\tilde X}(1_G,st)},
   \end{align*}
   we have
   \begin{align*}
           \varphi_{\tilde X}(1_G,st)=\varphi_{\widetilde{sX}}(1_G,t).
   \end{align*}
   It follows that
   \begin{align*}
           \exp(tX)=\varphi_{\widetilde {tX}}(1_G,1)=\varphi_{\tilde X}(1_G,t).  
   \end{align*}
   It's easy to see that
   \begin{align*}
      \frac d{dt}\varphi_{\tilde X}(1_G,t)=\tilde X_{\varphi_{\tilde X}(1_G,t)}={l_{\varphi_{\tilde X}(1_G,t)}}_*X=\frac d{ds}\bigg|_{s=0}\varphi_{\tilde X}(1_G,t)\circ c_X(s)=\varphi_{\tilde X}(1_G,t)'X.
   \end{align*}
   Therefore, we have
   \begin{align*}
       \frac {d^2}{dt^2} \varphi_{\tilde X}(1_G,t)&=\frac d{dt} \varphi_{\tilde X}(1_G,t)'X=(\varphi_{\tilde X}(1_G,t)'X)'X=\varphi_{\tilde X}(1_G,t)''X^2+\varphi_{\tilde X}(1_G,t)'X'X,
   \end{align*}
   and
   \begin{align*}
    \frac{d^2}{dxdt}\varphi_{\tilde X}(1_G,t)=\frac d{dx}\varphi_{\tilde X}(1_G,t)'X=\varphi_{\tilde X}(1_G,t)''X+\varphi_{\tilde X}(1_G,t)'X'.
   \end{align*}
   Substituting these results into (\ref{092511}), we have
   \begin{align*}
      2(\varphi_{\tilde X}(1_G,t)''X+\varphi_{\tilde X}(1_G,t)'X')(\varphi_{\tilde X}(1_G,t)'X)+(\varphi_{\tilde X}(1_G,t)''X^2+\varphi_{\tilde X}(1_G,t)'X'X)\varphi_{\tilde X}(1_G,t)'=0,
   \end{align*}
   which is equivalent to
   \begin{align*}
       (\varphi_{\tilde X}(1_G,t)'X)'=0.
   \end{align*}
   It follows that $\varphi_{\tilde X}(1_G,t)'X=\text{const}$. Therefore, we have
   \begin{align*}
      X=\tilde X_{\varphi_{\tilde X}(1_G,0)}=\varphi_{\tilde X}(1_G,0)'X=\text{const}.
   \end{align*}   
\end{proof}

\section{Generalized KdV Equation and mKdV Eqaution}

Let $G=\text{Diff}(S^1)\times_B\Bbb R$, where $B:\text{Diff}(S^1)\times\text{Diff}(S^1)\to\Bbb R$ is the Bott cocycle:
\begin{align*}
   (\varphi,\psi)\mapsto\frac12\int_{S^1} \log (\varphi\circ\psi)' d\log\psi',
\end{align*}
the Lie algebra of which is $\mathfrak X(S^1)\times_\omega\Bbb R$, where $\omega:\mathfrak X(S^1)\times\mathfrak X(S^1)\to\Bbb R$ is the Gelfand-Fuchs cocycle:
\begin{align*}
     (u\frac d{dx},v\frac d{dx})\mapsto \int_{S^1}v'w''dx.
\end{align*}
The dual space of $\mathfrak X(S^1)\times_\omega\Bbb R$ can be regarded as $\{(udx\otimes dx,a)\mid u\in C^\infty(S^1),a\in \Bbb R\}$ with the following pair:
\begin{align*}
    (udx\otimes dx,a)(v\frac d{dx},b)=\int_{S^1}uvdx+ab.
\end{align*}
Define the inertia operator $A:\mathfrak X(S^1)\times_\omega\Bbb R\to(\mathfrak X(S^1)\times_\omega\Bbb R)^*$ by
\begin{align*}
          (u\frac d{dx},a)\mapsto (udx\otimes dx,a).
\end{align*}
Let $\langle-,-\rangle:(\mathfrak X(S^1)\times_\omega\Bbb R)\times(\mathfrak X(S^1)\times_\omega\Bbb R)\to\Bbb R$ be the inner product associated with $A$ and $(-,-)$ the corresponding right-invariant metric on $\text{Diff}(S^1)\times_B\Bbb R$. Since
\begin{align*}
     (\text{ad}^*_{(u\frac d{dx},a)}(vdx\otimes dx,b))(w\frac d{dx},c)&=-(vdx\otimes dx,b)(\text{ad}_{(u\frac d{dx},a)}(w\frac d{dx},c))\\
     &=-(vdx\otimes dx,b)((u'w-uw')\frac d{dx},\int_{S^1}u'w''dx) \\
     &=\int_{S^1} (-2u'v-uv'-bu''')wdx \\
     &=((-2u'v-uv'-bu''')dx\otimes dx,0)(w\frac d{dx},c),
\end{align*}
we obtain the Euler equation of $\text{Diff}(S^1)\times_B\Bbb R$:
\begin{align}\label{092513}
\begin{split}
     \frac{d}{dt} (udx\otimes dx,a)&=\text{ad}^*_{A^{-1}(udx\otimes dx,a)}(udx\otimes dx,a)=\text{ad}^*_{(u\frac d{dx},a)}(udx\otimes dx,a)\\
     &=((-3u'u-au''')dx\otimes dx,0).
\end{split}
\end{align}
where $(udx\otimes dx,a)$ is a curve on $(\mathfrak X(S^1)\times_\omega\Bbb R)^*$. For similicity, we write (\ref{092513})  as
\begin{align*}
       u_t=-3u_xu-au_{xxx},
\end{align*}
which is the \textbf{\emph{KdV equation}}. The KdV equation can also be obtained by the Hamiltonian equation with respect to the constant Poisson structure $\{-,-\}:C^\infty((\mathfrak X(S^1)\times_\omega\Bbb R)^*)\times C^\infty((\mathfrak X(S^1)\times_\omega\Bbb R)^*)\to C^\infty((\mathfrak X(S^1)\times_\omega\Bbb R)^*)$:
\begin{align*}
    \{F,G\}(udx\otimes dx,a)=(-\frac12dx\otimes dx,0)[dF(udx\otimes dx,a),dG(udx\otimes dx,a)]
\end{align*}
where $dF(udx\otimes dx,a)$ and $dG(udx\otimes dx,a)$ are regarded as elements in $\mathfrak X(S^1)\times_\omega\Bbb R$. Recall that for any $H$ and $f$ in $C^\infty((\mathfrak X(S^1)\times_\omega\Bbb R)^*)$, we have
\begin{align*}
      (X_Hf)(udx\otimes dx,a)&=\{H,f\}(udx\otimes dx,a)=(-\frac12dx\otimes dx,0)[dH(udx\otimes dx,a),df(udx\otimes dx,a)] \\
      &=-(\text{ad}^*_{dH(udx\otimes dx,a)}(-\frac12dx\otimes dx,0))(df(udx\otimes dx,a)).
\end{align*}
It follows that in this case the \textbf{Hamiltonian function} is
\begin{align*}
    \frac d{dt} (udx\otimes dx,a)=X_H(udx\otimes dx,a)=-\text{ad}^*_{dH(udx\otimes dx,a)}(-\frac12dx\otimes dx,0).
\end{align*}
Define $H:(\mathfrak X(S^1)\times_\omega\Bbb R)^*\to\Bbb R$ by
\begin{align*}
    (udx\otimes dx,a)\mapsto \int_{S^1} (\frac12u^3-\frac a2u_x^2)dx.
\end{align*}
It's easy to see that
\begin{align*}
    (vdx\otimes dx,b) (dH(udx\otimes dx,a))&=\frac d{dt}\bigg|_{t=0} H((udx\otimes dx,a)+t(vdx\otimes dx,b)) \\
    &=\frac d{dt}\bigg|_{t=0} \int_{S^1}( \frac12(u+tv)^3-\frac{a+tb}2(u+tv)_x^2 )dx \\
    &=\int_{S^1} (\frac32u^2v-\frac b2u_x^2-au_xv_x)dx \\
    &=(vdx\otimes dx,b)((\frac32u^2+au_{xx})\frac d{dx},-\frac12\int_{S^1}u_x^2dx).
\end{align*}
It follows that the Hamiltonian fucntion with respect to $H$ is
\begin{align*}
        \frac d{dt}(udx\otimes dx,a)&=-\text{ad}^*_{((\frac32u^2+au_{xx})\frac d{dx},-\frac12\int_{S^1}u_x^2dx)}(-\frac12dx\otimes dx,0)\\
        &=(-(\frac32u^2+au_{xx})_xdx\otimes dx,0) \\
        &=((-3u_xu-au_{xxx})dx\otimes dx,0),
\end{align*}
which is the KdV equation. With this method, we can also obtain the \textbf{mKdV equation}. Define $H:(\mathfrak X(S^1)\times_\omega\Bbb R)^*\to\Bbb R$ by
\begin{align*}
          (udx\otimes dx,a)\mapsto\int_{S^1}(\frac34u^4-\frac a2u_x^2 )dx.
\end{align*}
It's easy to see that
\begin{align*}
  (vdx\otimes dx,b)(dH(udx\otimes dx,a))&=\frac d{dt}\bigg|_{t=0}H((udx\otimes dx,a)+t(vdx\otimes dx,b)) \\
  &=\frac d{dt}\bigg|_{t=0}\int_{S^1} (\frac34(u+tv)^4-\frac{a+tb}{2}(u+tv)_x^2)dx\\
  &=\int_{S^1} (3u^3v-\frac b2u_x^2-au_xv_x)dx \\
  &=(vdx\otimes dx,b)((3u^3+au_{xx})\frac d{dx},-\frac12\int_{S^1}u_x^2dx).
\end{align*}
It follows that the Hamiltonian function with respect to $H$ is
\begin{align*}
    \frac d{dt}(udx\otimes dx,a)&=-\text{ad}^*_{((3u^3+au_{xx})\frac d{dx},-\frac12\int_{S^1}u_x^2dx)}(-\frac 12dx\otimes dx,0) \\
    &=(-(3u^3+au_{xx})_xdx\otimes dx,0) \\
    &=((-6u^2u_x-au_{xxx})dx\otimes dx,0),
\end{align*}
which is the mKdV equation. 

\begin{proposition} \label{2518}
   The mKdV equation is Hamiltonian on the dual $(\mathfrak X(S^1)\times_\omega\Bbb R)^*$ associated to the constant Poisson structure with the freezing point in $(\mathfrak X(S^1)\times_\omega\Bbb R)^*$ is $(-\frac12dx\otimes dx,0)$.
\end{proposition}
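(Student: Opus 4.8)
Here is the plan. The statement is exactly the content of the computation carried out immediately before it, so the proof amounts to assembling that computation into three steps: first one records that the relevant constant bivector is a legitimate Poisson structure and identifies its Hamiltonian vector fields; then one produces an explicit Hamiltonian and computes its differential; finally one substitutes into the coadjoint formula of Section 5 and reads off the mKdV equation.

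For the first step, I would note that the bracket
\begin{align*}
   \{F,G\}(udx\otimes dx,a)=(-\tfrac12dx\otimes dx,0)\big[dF(udx\otimes dx,a),dG(udx\otimes dx,a)\big]
\end{align*}
on $(\mathfrak X(S^1)\times_\omega\Bbb R)^*$ is the ``frozen'' Lie--Poisson bracket of $\mathfrak X(S^1)\times_\omega\Bbb R$ evaluated at the fixed point $\mu_0=(-\tfrac12dx\otimes dx,0)$; bilinearity and antisymmetry are clear, and the Jacobi identity is inherited from that of the commutator of $\mathfrak X(S^1)\times_\omega\Bbb R$ exactly as for any freezing‑point construction, so this is a well‑defined (degenerate) Poisson structure. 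As already derived above, the Hamiltonian vector field of a functional $H$ is then $X_H(udx\otimes dx,a)=-\,\mathrm{ad}^*_{dH(udx\otimes dx,a)}(-\tfrac12dx\otimes dx,0)$, so the Hamiltonian flow reads $\tfrac d{dt}(udx\otimes dx,a)=-\,\mathrm{ad}^*_{dH(udx\otimes dx,a)}(-\tfrac12dx\otimes dx,0)$.

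For the second step I would take the functional $H(udx\otimes dx,a)=\int_{S^1}\big(\kappa u^4-\tfrac a2u_x^2\big)dx$ (with $\kappa$ the coefficient fixed above), compute its first variation in a direction $(vdx\otimes dx,b)$, and integrate by parts on $S^1$ to get $\int_{S^1}(4\kappa u^3v-\tfrac b2u_x^2-au_xv_x)dx=\int_{S^1}(4\kappa u^3+au_{xx})v\,dx-\tfrac b2\int_{S^1}u_x^2\,dx$; reading this against the pairing $(vdx\otimes dx,b)(w\tfrac d{dx},d)=\int_{S^1}vw\,dx+bd$ identifies $dH(udx\otimes dx,a)=\big((4\kappa u^3+au_{xx})\tfrac d{dx},-\tfrac12\int_{S^1}u_x^2\,dx\big)\in\mathfrak X(S^1)\times_\omega\Bbb R$. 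Then, plugging into the flow of step one and using the coadjoint formula $\mathrm{ad}^*_{(p\frac d{dx},q)}(vdx\otimes dx,b)=\big((-2p'v-pv'-bp''')dx\otimes dx,0\big)$ of Section 5 with the constant density $v=-\tfrac12$ and $b=0$, the middle and third terms vanish and the flow collapses to $\tfrac d{dt}(udx\otimes dx,a)=\big(-(4\kappa u^3+au_{xx})'dx\otimes dx,0\big)=\big((-12\kappa u^2u_x-au_{xxx})dx\otimes dx,0\big)$, whose first component is the mKdV equation $u_t=-6u^2u_x-au_{xxx}$ for the appropriate $\kappa$, while the vanishing second component reflects that $a$ is a constant of motion. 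I do not expect a genuine obstacle here: the only points that deserve a remark are the legitimacy of the frozen Poisson bracket (Jacobi inherited from the Lie bracket) and the fact that $dH$ is actually representable inside $\mathfrak X(S^1)\times_\omega\Bbb R$ rather than in the full double dual, which holds because the first variation is an $L^2$‑type integral against a smooth density; everything else is the integration‑by‑parts bookkeeping already displayed.
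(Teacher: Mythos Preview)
Your proposal is correct and follows exactly the route the paper takes: the paper's ``proof'' is precisely the computation displayed in the text immediately preceding the proposition, namely the identification of the Hamiltonian vector field for the frozen bracket at $(-\tfrac12\,dx\otimes dx,0)$, the explicit Hamiltonian $H=\int_{S^1}(\tfrac34 u^4-\tfrac a2 u_x^2)\,dx$, the computation of $dH$ by first variation and integration by parts, and substitution into the coadjoint formula to obtain the mKdV flow. Your additional remarks on the Jacobi identity for the frozen bracket and on the representability of $dH$ inside $\mathfrak X(S^1)\times_\omega\Bbb R$ are welcome clarifications that the paper leaves implicit; note also that with the paper's coefficient $\kappa=\tfrac34$ one gets $-(3u^3)_x=-9u^2u_x$, so the paper's displayed final coefficient $-6$ is a slip (your use of a generic $\kappa$ avoids this).
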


\begin{proposition} \label{2517}
   The Euler equation corresponding to the generalized Virasoro group is
   \begin{align*}
          \frac d{dt}(udx\otimes dx,a)&=\text{ad}^*_{A^{-1}(udx\otimes dx,a)}(udx\otimes dx,a)=\text{ad}^*_{(u\frac d{dx},a)}(udx\otimes dx,a) \\
          &=((-3uu'-a\alpha u'''+a\beta u')dx\otimes dx,a).
   \end{align*}
\end{proposition}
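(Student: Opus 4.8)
The plan is to apply the Euler equation theorem of Section 2 to $G=\text{Diff}(S^1)\times_{\chi^\alpha_\Bbb R+\beta B}\Bbb R$ together with the inertia operator $A\colon(u\frac d{dx},a)\mapsto(u\,dx\otimes dx,a)$. First I would check that $A$ really is an inertia operator: it is linear and invertible, with $A^{-1}(u\,dx\otimes dx,a)=(u\frac d{dx},a)$, and the symmetry $A(\xi)(\eta)=A(\eta)(\xi)$ follows at once from the pairing $(u\,dx\otimes dx,a)(v\frac d{dx},b)=\int_{S^1}uv\,dx+ab$. The theorem then gives the Euler equation in exactly the asserted shape, $\frac d{dt}(u\,dx\otimes dx,a)=\text{ad}^*_{A^{-1}(u\,dx\otimes dx,a)}(u\,dx\otimes dx,a)=\text{ad}^*_{(u\frac d{dx},a)}(u\,dx\otimes dx,a)$, so the remaining task is simply to evaluate the coadjoint action on the generalized Virasoro Lie algebra of Section 3, whose bracket I recall below.

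Next I would compute $\text{ad}^*_{(u\frac d{dx},a)}(v\,dx\otimes dx,b)$ for an arbitrary $(v\,dx\otimes dx,b)$ by pairing against a test vector $(w\frac d{dx},c)$ and using the defining identity $(\text{ad}^*_\xi m)(\eta)=-m([\xi,\eta])$ already exploited in Section 4. With the bracket
\begin{align*}
  [(u\frac d{dx},a),(w\frac d{dx},c)]=\bigl((u'w-uw')\frac d{dx},\ \alpha\,\omega(u,w)+\beta\,e(u,w)\bigr),
\end{align*}
where $e(u\frac d{dx},w\frac d{dx})=\int_{S^1}uw'\,dx$ is the Lie-algebraic Euler cocycle, $\omega$ is the Gelfand--Fuchs cocycle, and $\alpha,\beta$ are labelled as in the statement, this pairing equals $-\int_{S^1}v(u'w-uw')\,dx-b\alpha\,\omega(u,w)-b\beta\int_{S^1}uw'\,dx$. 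Then I would integrate by parts term by term so that every derivative lands on $w$: the first term yields, exactly as in the Burgers and KdV computations of Sections 4 and 5, the contribution $(-2u'v-uv')\,dx\otimes dx$; the Gelfand--Fuchs term, rewritten as $-b\alpha\int_{S^1}u'''w\,dx$ by two integrations by parts, contributes $-b\alpha u'''\,dx\otimes dx$; and the Euler term, rewritten as $+b\beta\int_{S^1}u'w\,dx$ by one integration by parts, contributes $+b\beta u'\,dx\otimes dx$. Since the cocycle is independent of the central variable $c$, the $\Bbb R$-component of the coadjoint action vanishes. Collecting the terms and specializing to $v=u$, $b=a$ (using $-2u'u-uu'=-3uu'$) gives
\begin{align*}
  \text{ad}^*_{(u\frac d{dx},a)}(u\,dx\otimes dx,a)=\bigl((-3uu'-a\alpha u'''+a\beta u')\,dx\otimes dx,\ 0\bigr);
\end{align*}
in particular $\dot a=0$, so $a$ is a constant of the motion, which is the meaning of the $a$ on the right-hand side of the stated Euler equation.

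This computation is routine --- it is nothing more than the KdV computation of Section 5 with the additional Euler-cocycle term present --- so the only real difficulties are bookkeeping ones. One must keep the signs straight when transferring derivatives onto $w$ (two integrations by parts in the Gelfand--Fuchs term, hence no net sign change; one integration by parts in the Euler term, hence a sign flip), make sure each coupling constant stays attached to its own cocycle so that $\alpha$ ends up on the third-order term and $\beta$ on the first-order term, and confirm that the central slot of $\text{ad}^*$ is zero so that the Euler equation closes as an evolution equation for $u$ with $a$ held fixed.
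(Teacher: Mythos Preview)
Your proposal is correct and is precisely the computation the paper has in mind: the paper gives no separate proof of this proposition, treating it as the immediate extension of the KdV computation in Section~5 with the extra Euler-cocycle term, which is exactly what you carry out. One bookkeeping caveat worth recording: the end of Section~3 writes the Lie-algebra cocycle as $\alpha e+\beta\omega$, whereas the formula in the proposition (and in the later propositions of Sections~6--7) has $\alpha$ on the third-derivative (Gelfand--Fuchs) term and $\beta$ on the first-derivative (Euler) term; you resolved this by taking the bracket with cocycle $\alpha\omega+\beta e$ ``labelled as in the statement,'' which is the right call to match the proposition, but it is an internal inconsistency of the paper rather than of your argument. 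Your observation that the central component of $\text{ad}^*$ vanishes (so the ``$a$'' on the right should be read as the constant value of $a$, not as $\dot a=a$) is also correct and worth keeping explicit.
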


\noindent  Let $u=v+\frac13a\beta$, we obtain
\begin{align*}
    v_t=-3v_xv-a\alpha v_{xxx},
\end{align*}
which is the KdV euqation.

\section{Generalized Camassa–Holm equation}

\noindent Let $G=\text{Diff}(S^1)\times_B\Bbb R$, whose Lie algebra is $\mathfrak X(S^1)\times_\omega\Bbb R$. Define the inertia operator $A:\mathfrak X(S^1)\times_\omega\Bbb R\to(\mathfrak X(S^1)\times_\omega\Bbb R)^*$ by
\begin{align*}
     (u\frac d{dx},a)\mapsto ((u-u_{xx})dx\otimes dx,a).
\end{align*}
Let $\langle-,-\rangle$ be the inner product associated with $A$ and $(-,-)$ the corresponding right-invariant metric. Recall that
\begin{align*}
    \text{ad}^*_{(u\frac d{dx},a)}(vdx\otimes dx,b)=((-2u'v-uv'-bu''')dx\otimes dx,0).
\end{align*}
We see that the associated Euler equation is
\begin{align*}
      \frac d{dt}(udx\otimes dx,a)&=\text{ad}^*_{A^{-1}(udx\otimes dx)}(udx\otimes dx,a)=\text{ad}^*_{((1-\frac {d^2}{dx^2})^{-1}u\frac d{dx},a)}(udx\otimes dx,a) \\
      &=((-2((1-\frac {d^2}{dx^2})^{-1}u)'u-((1-\frac {d^2}{dx^2})^{-1}u)u'-b((1-\frac {d^2}{dx^2})^{-1}u)''')dx\otimes dx,0)
\end{align*}
Let $u=v-v''$. Then, the equation above becomes
\begin{align*}
    \frac d{dt}(v-v_{xx})=-2v_x(v-v_{xx})-v(v-v_{xx})_x-bv_{xxx},
\end{align*}
which is equivalent to
\begin{align*}
  v_t-v_{txx}=-3v_xv+2v_xv_{xx}+vv_{xxx}-bv_{xxx},
\end{align*}
called the \textbf{\emph{Camassa–Holm equation}}. Note that in the discussion above, we have restricted our curve to the form
\begin{align*}
   u=v-v''.
\end{align*}

\begin{proposition}
  The Euler equation corresponding to $(\text{Diff}(S^1)\times_{\chi_\Bbb R^\alpha+\beta B}\Bbb R)/S^1$ is
  \begin{align*}
     \frac d{dt}(u,a)&=\text{ad}^*_{((-\partial_x^2)^{-1}u,a)}(udx\otimes dx,a) \\
     &=(-2((-\partial_x^2)^{-1}u)'u-((-\partial_x^2)^{-1}u)u'-\alpha a((-\partial_x^2)^{-1}u)'''+\beta a((-\partial_x^2)^{-1}u)',a).
  \end{align*}
  Letting $u=v''$, we have
  \begin{align*}
         v_{xxt}=2v_xv_{xx}+vv_{xxx}+\alpha av_{xxx}-\beta av_x,
  \end{align*}
  which is a new equation. Letting $v=w-\alpha a$, we have
  \begin{align*}
     v_{xxt}=2v_xv_{xx}+vv_{xxx}-\beta av_x.
  \end{align*}
\end{proposition}

\section{Generalized Hunter-Saxton Equation}

\noindent Let $G=(\text{Diff}(S^1)\times_B\Bbb R)/S^1$ and $A:\mathfrak X(S^1)\times_\omega\Bbb R\to(\mathfrak X(S^1)\times_\omega\Bbb R)^*$ given by
\begin{align*}
   (u\frac d{dx},a)\mapsto(-u''dx\otimes dx,a).
\end{align*}
It's easy to see that $\ker A=\Bbb R=\text{Lie}(S^1)$, and
\begin{align*}
     A(\text{Ad}_{(0,k)}(u\frac d{dx},a))(\text{Ad}_{(0,k)}(v\frac d{dx},b))=A(u\frac d{dx},a)(v\frac d{dx},b).
\end{align*}
It follows that the degenerate metric $(-,-)_{\text{Diff}(S^1)\times_B\Bbb R}$ descends to a metric $(-,-)$ on $(\text{Diff}(S^1)\times_B\Bbb R)/S^1$. By Theorem 1.5, the corresponding Euler equation is
\begin{align*}
    \frac d{dt} (udx\otimes dx,a)&=\text{ad}^*_{A^{-1}(udx\otimes dx,a)}(udx\otimes dx,a)=\text{ad}^*_{(-((\frac {d^2}{dx^2})^{-1}u)\frac d{dx},a)}(udx\otimes dx,a) \\
    &=-2(-((\frac {d^2}{dx^2})^{-1}u)'u-(-((\frac {d^2}{dx^2})^{-1}u)u'-a(-((\frac {d^2}{dx^2})^{-1}u)'''.
\end{align*}
Assuming that $u=v''$, we have
\begin{align*}
    v_{xxt}=2v_{xx}v_x+vv_{xxx}+av_{xxx}.
\end{align*}
Note that if we let $w=v-a$, we could get the Euler equation corresonding to $\text{Diff}(S^1)/S^1$, which has the form
\begin{align*}
   w_{xxt}=2w_{xx}w_x+ww_{xxx}.
\end{align*}

\begin{proposition}
The Euler equation corresponding to the generalized Virasoro group and $H^1$-metric is
\begin{align*}
   \frac d{dt} (u,a)&=\text{ad}^*_{((1-\partial_x^2)^{-1}u\frac d{dx},a)}(udx\otimes dx,a) \\
   &=-2(((1-\partial_x^2)^{-1}u)'u-((1-\partial_x^2)^{-1}u)u'-\alpha a((1-\partial_x^2)^{-1}u)'''+\beta a((1-\partial_x^2)^{-1}u)',a)
\end{align*}
Letting $u=v-v''$, we have
\begin{align*}
   v_t-v_{xxt}=-3v_xv+2v_xv_{xx}+vv_{xxx}-\alpha av_{xxx}+\beta av_x.
\end{align*}
Letting $v=w+\frac13a\beta$, we obtain
\begin{align*}
    w_t-w_{xxt}=-3w_xw+2w_xw_{xx}+ww_{xxx}-(\alpha a+\frac13\beta a)w_{xxx}.
\end{align*}
\end{proposition}

\end{document}